\documentclass[a4paper,11pt,reqno]{amsart}

\usepackage[T1]{fontenc}
\usepackage{amsmath,amssymb,amsthm,mathtools,hyperref,geometry,cleveref,mathrsfs,xcolor,microtype,tabularx,array,multirow,float,tikz}

\allowdisplaybreaks

\definecolor{DarkBlue}{rgb}{0.1,0.1,0.55}
\definecolor{DarkRed}{rgb}{0.55,0.1,0.1}

\hypersetup{colorlinks=true,linkcolor=DarkBlue,citecolor=DarkRed,urlcolor=DarkBlue}

\numberwithin{equation}{section}
\theoremstyle{plain}
\newtheorem{theorem}{Theorem}[section]
\newtheorem{proposition}[theorem]{Proposition}
\newtheorem{corollary}[theorem]{Corollary}
\newtheorem{lemma}[theorem]{Lemma}
\newtheorem{remark}[theorem]{Remark}

\newcommand{\R}{\mathbb R}
\newcommand{\Sph}{\mathbb S}
\newcommand{\Torus}{\mathbb T}
\newcommand{\lk}{\operatorname{Lk}}
\newcommand{\II}{\mathrm{II}}
\newcommand{\dd}{\,\mathrm d}
\numberwithin{equation}{section}

\newcommand{\eqnum}{\refstepcounter{equation}\tag{\theequation}}

\title[Closed asymptotic curves]{A solution to a problem of Ghomi and Raffaelli on closed asymptotic curves}
\author[W. R. Ding]{Weiran Ding}
\address{School of Mathematical Sciences, South China Normal University, Guangzhou 510000, P. R. CHINA.}
\email{dingwr0806@m.scnu.edu.cn}
\subjclass[2020]{53A04, 53A05; 57K10}
\keywords{Closed asymptotic curve, negative curvature, Gauss map, self-linking}

\usetikzlibrary{arrows.meta}
\providecommand{\eqnum}{\refstepcounter{equation}\tag{\theequation}}

\newcommand{\ACcornerdata}{%
-1/0/0,
-0.542771473/0/0,
-0.497048621/0.00000148/-0.000000749,
-0.451326298/0.000173101/-0.000082605,
-0.405621758/0.001375906/-0.000620845,
-0.36003758/0.004837122/-0.00206574,
-0.314838088/0.011645101/-0.004706168,
-0.270470286/0.022613478/-0.008640975,
-0.227536132/0.038266265/-0.013806097,
-0.186738782/0.058847258/-0.020007813,
-0.148817501/0.084337294/-0.026955098,
-0.114479903/0.114479903/-0.034291612,
-0.084337294/0.148817501/-0.041628125,
-0.058847258/0.186738782/-0.04857541,
-0.038266265/0.227536132/-0.054777127,
-0.022613478/0.270470286/-0.059942248,
-0.011645101/0.314838088/-0.063877055,
-0.004837122/0.36003758/-0.066517483,
-0.001375906/0.405621758/-0.067962378,
-0.000173101/0.451326298/-0.068500619,
-0.00000148/0.497048621/-0.068582475,
0/0.542771473/-0.068583224,
0/1/-0.068583224}

\newcommand{\ACvertexdata}{%
0/0/0/0/0/0/1/1/0/43.599945133/2/0,
1/43.599945133/0/2/0/1/0/0/-1/1/1/0.000006858,
2/43.599945133/-1/2/-1/0/-1/1/0/1/4/-1.979986283,
3/44.599945133/-1/6/-1/1/0/0/1/2.1/4/-1.009993142,
4/44.599945133/1.1/6/3/0/1/-1/0/1/1/11.54,
5/43.599945133/1.1/5/3/-1/0/0/-1/1/3.5/14.569993142,
6/43.599945133/0.1/5/-0.5/0/-1/-1/0/43.599945133/2/9.649986283,
7/0/0.1/3/-0.5/-1/0/0/1/1/2.5/-12.194979425,
8/0/1.1/3/2/0/1/-1/0/1/5/-9.229972567,
9/-1/1.1/-2/2/-1/0/0/-1/2.1/5/-7.219979425,
10/-1/-1/-2/-3/0/-1/1/0/1/2/-3.019986283,
11/0/-1/0/-3/1/0/0/1/1/3/-0.029993142}

\newcommand{\ACarea}{-0.068583224}
\newcommand{\ACmidX}{-0.114479903}
\newcommand{\ACmidY}{0.114479903}
\newcommand{\ACmidA}{-0.034291612}

\begin{document}

\begin{abstract}
We construct a smooth embedded open annulus in Euclidean three-space with strictly negative Gaussian curvature and injective Gauss map that contains a simple closed asymptotic curve of zero normal self-linking. This gives an affirmative answer to Problem~1.1 of Ghomi and Raffaelli. The construction starts with two simple planar curves whose tangent vectors are positive multiples of one another. Their mixed integral is made zero by rounding a pair of polygons with one free length parameter.
\end{abstract}

\maketitle

\section{Introduction}

Let $M\subset\R^3$ be an oriented surface with unit normal $N$. A regular curve $\gamma$ in $M$ is asymptotic if $\langle\gamma'',N\circ\gamma\rangle=0$. For a simple closed curve $\gamma$ and a unit normal field $n$ along it, write
\[
	\lk(\gamma,n)=\lk(\gamma,\gamma+\varepsilon n),
\]
where the right-hand side is the linking number of the two curves and $\varepsilon>0$ is sufficiently small. Ghomi and Raffaelli \cite[Problem 1.1]{GR} ask whether a closed asymptotic curve can have $\lk(\gamma,N\circ\gamma)=0$ when $M$ has strictly negative Gaussian curvature and its Gauss map is injective. We give an affirmative answer.

\begin{theorem}\label{thm:main}
	There is a $C^\infty$ embedded open annulus $M\subset\R^3$ with strictly negative Gaussian curvature and an injective Gauss map $N:M\to\Sph^2$, containing a smooth regular simple closed asymptotic curve $\gamma$ with
	\[
		\lk(\gamma,N\circ\gamma)=0.
	\]
\end{theorem}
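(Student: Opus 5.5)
The plan is to reduce \Cref{thm:main} to the construction of a single space curve, and then build the annulus as a thin strip around it.

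\textbf{Reduction.} If $\gamma$ is asymptotic on $M$, then $N\circ\gamma$ is orthogonal to both $\gamma'$ and $\gamma''$. So wherever the curvature $\kappa$ of $\gamma$ is positive, $N\circ\gamma=\pm B$, with $B$ the binormal. Moreover the Beltrami--Enneper relation gives $K=-\tau^2$ along $\gamma$. Conversely, suppose we are given a closed, simple, regular curve $\gamma$ with $\kappa>0$ and $\tau\neq 0$ everywhere, whose binormal indicatrix $B:\Sph^1\to\Sph^2$ is injective. Take the ruled strip
\[
	M_\delta=\{\gamma(t)+sT(t)\times B(t)+\tfrac{s^2}{2}w(t)\,:\,|s|<\delta\},
\]
or simply the strip ruled by the principal normals, with the second-order term $w$ available for adjustment. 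Its normal along $\gamma$ is $B$, so $\gamma$ is asymptotic. Its Gaussian curvature along $\gamma$ is $-\tau^2<0$, hence $K<0$ on $M_\delta$ for small $\delta$. Since $K\neq0$, the Gauss map is a local diffeomorphism near $\gamma$, and it is injective on the compact core $\gamma$ because $B$ is. A standard compactness argument (injective on a compact set plus local diffeomorphism implies injective on a neighbourhood) then gives injectivity of $N$ on $M_\delta$ for $\delta$ small. The same holds for embeddedness, since $\gamma$ is simple. Finally $\lk(\gamma,N\circ\gamma)=\lk(\gamma,B)$, which is the self-linking number of $\gamma$. This can be computed either as $\mathrm{Wr}(\gamma)+\frac1{2\pi}\int\tau\,ds$ (C\u{a}lug\u{a}reanu--White), or directly as a signed crossing count of $\gamma$ against $\gamma+\varepsilon B$ in a generic projection.

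\textbf{Construction of $\gamma$.} Following the hint, I would take two simple closed planar curves $\alpha,\beta$ with $\beta'=\lambda\alpha'$, $\lambda>0$ (parallel tangent vectors), and set
\[
	\gamma(t)=\bigl(\alpha(t)+\beta(t),\ z(t)\bigr),\qquad z'=\det(\alpha,\beta'),
\]
or a similar lift whose third coordinate is a mixed integral of $\alpha$ and $\beta$. This $\gamma$ closes exactly when the mixed integral $\oint\det(\alpha,\dd\beta)$ vanishes. The parallel-tangent condition is what should make $T,N,B$ explicitly computable, so that $\tau$ has a definite sign. I expect $\tau$ to be governed by $\lambda'$ and the planar curvature, so the curves would be chosen with $\lambda$ strictly monotone away from turning points. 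This also makes $B$ trace a simple loop, since its horizontal part rotates monotonically with the common tangent direction.

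To kill the mixed integral, I would start from two polygons with parallel edges, one of whose edge lengths is a free parameter $L$. The mixed area is affine in $L$, and it changes sign between two extreme values of $L$, so the intermediate value theorem fixes an $L$ where it vanishes. I would then round the corners by a common smooth convolution profile, so that tangents stay positively parallel, and adjust $L$ once more after rounding by continuity.

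\textbf{Main obstacle.} The hardest step is achieving all the requirements at once: $\tau\neq0$ everywhere, including on the rounded corners; $B$ injective; and self-linking exactly $0$ rather than the value $\pm1$ forced in naive convex examples. For the self-linking, I would choose $\alpha,\beta$ so that the planar projection $\alpha+\beta$ has a controlled number of crossings, giving a writhe that cancels the total torsion term. The cleanest check is a direct crossing count of $\gamma$ and its binormal push-off in the vertical projection, which is computable from the polygon data.
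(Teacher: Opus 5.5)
\textbf{The reduction targets an empty class of curves.} You ask for a closed curve with $\kappa>0$ and $\tau\neq0$ everywhere whose Frenet binormal $B$ is injective. No such curve exists, whatever its knot type or self-linking.

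Since $B_s=-\tau N$, the binormal is a regular spherical curve with geodesic curvature $\kappa/|\tau|>0$. A simple closed spherical curve without inflections is convex (a spherical form of Hadamard's theorem), so $B$ lies in an open hemisphere. Rotate so that $B_3>0$, and write $B=(-JQ,1)/\sqrt{1+|Q|^2}$ and $\gamma=(P,z)$. Then:
\begin{itemize}
\item $\langle B,\gamma'\rangle=0$ gives $z'=\det(Q,P')$;
\item $\langle B',\gamma'\rangle=0$ gives $Q'=fP'$, with $f$ nowhere zero because $\tau\neq0$;
\item $\gamma'\times\gamma''=\det(P',P'')\,(-JQ,1)$ and $\kappa\neq0$ give $\det(Q',Q'')=f^2\det(P',P'')\neq0$.
\end{itemize}
So $Q$ is a simple, locally convex planar curve and bounds a convex domain. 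But closure of $\gamma$, namely $\int\det(Q,P')\,dt=0$, puts every point of the plane on a tangent line of $Q$, by the argument of Lemma~\ref{add:lem:tangent-lines} (which only uses that $f$ has one sign). This is the obstruction the paper attributes to Ghomi and Raffaelli (Notes 5.1--5.2) to explain why Aicardi's curves and the second Ni--Zhang--Zhang example fail.

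Your heuristic that $B$ is simple because its horizontal part turns monotonically with the tangent is exactly what this forbids. The planar curves must be nonconvex, so the curvature of $\gamma$ must vanish somewhere. Your own rounded polygons show the same thing. On a retained straight edge, your lift $(\alpha+\beta,z)$ is either straight ($\kappa=0$, so there is no Frenet frame) or, if $\lambda$ varies, an arc in a vertical plane, so $\tau=0$ and $B$ is constant there.

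\textbf{What is missing.} You need to abandon the Frenet binormal and prescribe the surface normal independently of the curvature of $\gamma$. The paper takes $\gamma=(P,z)$ with $z'=\det(Q,P')$ and $n=(-JQ,1)/\sqrt{1+|Q|^2}$. With this choice:
\begin{itemize}
\item $\langle n,\gamma'\rangle=\langle n,\gamma''\rangle=0$ hold identically;
\item the geodesic torsion is $f/(1+|Q|^2)>0$, with no monotonicity of $f$ needed;
\item $n$ is injective exactly when $Q$ is simple;
\item the strip is ruled by $\nu=n\times T$, which, unlike the principal normal, is defined where $\kappa=0$.
\end{itemize}
This also removes your main obstacle for the linking number. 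Since $n_3>0$ and $P$ is simple, $\lk(\gamma,n)=0$ by deforming $n$ to $e_3$ and flattening $\gamma$ into the plane (Lemma~\ref{lem:zero-linking}); no writhe or total torsion needs to be computed.

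Your closing mechanism, an affine dependence on one edge length, is the right one. However, convolving the two polygonal velocities does not keep $Q'$ parallel to $P'$ at a corner unless the speed ratios agree on the two incident edges. The paper instead uses equal speeds and the same arc $r_i$ at every corner, and changes speeds only on straight pieces.
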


Here an open annulus means a surface diffeomorphic to $\Sph^1\times(-1,1)$. In particular, $M$ has no boundary, and no completeness assertion is intended. The normal push-offs in the theorem are disjoint regular embeddings for every sufficiently small positive $\varepsilon$.

The classical flexibility problem for closed surfaces, listed as Problem 50 in Yau's problem section \cite{Yau}, asks whether a closed surface in $\R^3$ can admit a nontrivial continuous isometric deformation in the $C^2$ category; see also \cite[Section 1.1]{Ghomi}. A related rigidity problem concerns smooth tight surfaces. Nirenberg \cite{Nirenberg} sought to extend Alexandrov's rigidity theorem for analytic tight surfaces to the smooth category. His work led to the conjecture that a negatively curved annulus bounded by convex curves in planes tangent to the annulus along its boundary contains no closed asymptotic curve.

Ghomi and Raffaelli \cite[Notes 1.5 and 1.6]{GR} observed that, for an embedded annulus with these boundary conditions, the Gauss map is injective and every closed asymptotic curve has zero normal self-linking. A negative answer to their Problem 1.1 would therefore prove Nirenberg's conjecture in the embedded case. Theorem \ref{thm:main} gives an affirmative answer instead: without the prescribed boundary geometry, injectivity of the Gauss map and vanishing normal self-linking do not suffice to exclude closed asymptotic curves.

There is also a formulation in terms of space curves alone. Following Ghomi \cite[Section 1.2]{Ghomi}, a continuous binormal is a continuous unit field orthogonal to a continuous choice of planes containing $\gamma'$ and $\gamma''$. This definition allows the ordinary curvature to vanish. Problem~1.4 of \cite{Ghomi} asks whether an injective continuous binormal must have nonzero self-linking. Theorem \ref{thm:main} gives a counterexample even when the binormal is required to be a regular embedding.

\begin{corollary}\label{cor:binormal}
	There are smooth regular embeddings $\gamma:\Sph^1\to\R^3$ and $n:\Sph^1\to\Sph^2$ such that
	\[
		\langle n,\gamma'\rangle=\langle n,\gamma''\rangle=0,\qquad\lk(\gamma,n)=0.
	\]
\end{corollary}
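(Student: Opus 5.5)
We derive Corollary~\ref{cor:binormal} directly from Theorem~\ref{thm:main}, taking $\gamma$ to be the asymptotic curve provided there and $n=N\circ\gamma$. First we reparametrize $\gamma$ as a smooth regular embedding $\Sph^1\to M$; this is possible because $\gamma$ is a smooth regular simple closed curve. Since $\gamma$ lies in $M$, its velocity is tangent to $M$, so $\langle n,\gamma'\rangle=0$. By definition, an asymptotic curve satisfies $\langle\gamma'',N\circ\gamma\rangle=0$, which gives $\langle n,\gamma''\rangle=0$. The definition of an asymptotic curve is stated for a particular parametrization, so we should check that this condition survives reparametrization. If $\tilde\gamma=\gamma\circ\phi$, then
\[
\tilde\gamma''=\phi'^2\,\gamma''\circ\phi+\phi''\,\gamma'\circ\phi .
\]
Both terms are orthogonal to $N$, so the condition is indeed invariant. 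The equality $\lk(\gamma,n)=0$ is exactly the conclusion of Theorem~\ref{thm:main}.

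It remains to show that $n:\Sph^1\to\Sph^2$ is a smooth regular embedding. Smoothness follows because $N$ is smooth on the $C^\infty$ surface $M$ and $\gamma$ is smooth. Injectivity follows because $\gamma$ is injective and the Gauss map $N$ is injective by Theorem~\ref{thm:main}. Since $\Sph^1$ is compact, an injective continuous map from it is a topological embedding.

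For regularity we differentiate: $n'=dN(\gamma')$. Because the Gaussian curvature $K=\det dN$ is strictly negative, the differential $dN_p$ is a linear isomorphism at every point $p\in M$. Hence $n'\neq0$ whenever $\gamma'\neq0$, which holds because $\gamma$ is regular. So $n$ is an injective immersion of a compact manifold, and therefore a smooth embedding.

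The only point needing any care is this regularity step, and it is settled by $K<0$. Note that along an asymptotic direction $v$ we have $\II(v,v)=0$, but $dN(v)$ is still nonzero: it equals $-\II(v,\cdot)^\sharp$, and $\II$ is nondegenerate when $K\neq0$. Finally, $n$ is a binormal in Ghomi's sense, namely orthogonal to a plane containing $\gamma'$ and $\gamma''$: the tangent plane of $M$ along $\gamma$ is such a plane. This remark places the result in the context of Ghomi's Problem~1.4 but is not needed for the statement of the corollary.
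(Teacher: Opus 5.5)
Your proof is correct, but it takes a different route from the paper's. The paper does not treat Theorem~\ref{thm:main} as a black box. It reuses the explicit pair $\gamma=(P,z)$, $n=\Phi\circ Q$ of \eqref{eq:lift} and reads the two orthogonality relations off \eqref{eq:orthogonality}. It gets that $n$ is a regular embedding directly from the construction: $Q$ is a regular planar embedding and $\Phi$ is a diffeomorphism onto the upper hemisphere (Proposition~\ref{prop:annulus}). Zero linking is Lemma~\ref{lem:zero-linking}.

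You instead derive everything from the statement of the theorem. Your only substantive input is that $K<0$ makes $dN$ (equivalently $\II$) nondegenerate, so $n'=dN(\gamma')\neq0$. The paper's argument needs no curvature information once the planar data are in hand. Yours shows that the corollary holds for the asymptotic curve of any surface with the properties in Theorem~\ref{thm:main}, and it isolates why strict negative curvature gives a regular binormal for free. That is exactly what fails in Remark~\ref{rem:continuous-binormal}. There $b_0'$ vanishes where $\gamma_0$ crosses the axes on which $z=x^4-y^4$ has zero Gaussian curvature, since at those points $\gamma_0'$ lies in the kernel of $dN$.
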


Earlier constructions and restrictions on closed asymptotic curves are due to Kovaleva \cite{Kov68}, Panov \cite{Panov}, and Arnold \cite{Arnold}; see also the discussion in \cite{GR}. The examples compared in \cite[Section 4]{GR} exhibit the two properties in Theorem \ref{thm:main} separately: Kovaleva's example has zero linking but a noninjective normal curve, whereas Ghomi and Raffaelli's Example 4.2 has an embedded normal curve and linking number $2$. The point here is to obtain both properties at once.

Without regularity of the binormal, Ghomi's question already has a negative answer obtained from the first example of Ni, Zhang, and Zhang \cite[Section 2]{NZZ}; we give the short verification in Remark \ref{rem:continuous-binormal}. Their surface has zero Gaussian curvature along axes crossed by the closed curves. Their second example is a complete negatively curved immersion built from a curve with nonvanishing ordinary curvature and torsion, and therefore cannot have an injective normal along its asymptotic core \cite[Notes 5.1 and 5.2]{GR}. Neither construction gives Theorem \ref{thm:main}.

Aicardi \cite{Aicardi} constructed curves with nonvanishing curvature and torsion and arbitrary self-linking; see \cite[Section 4]{GR}. For the same reason, these curves cannot have an injective binormal. Our curve instead contains straight subarcs. In particular, it does not satisfy the hypothesis in Kovaleva \cite[p. 954]{Kov95} that the curvature have only finitely many zeros, each of finite order.

The construction reduces to finding two simple planar curves $P$ and $Q$ with $Q'=fP'$ for a positive function $f$ and with vanishing mixed integral $\int\det(Q,P')$. The zero integral closes a space-curve lift of $P$, while $Q$ prescribes its normal curve. A thin ruled strip then gives the annulus. The ruled construction is standard; see \cite[Note 2.1]{GR}. The main step is the choice of planar data, which we obtain by rounding two polygons simultaneously and adjusting a single length parameter. The annulus may also be chosen as a graph over a planar annulus with an injective gradient map; see Corollary \ref{add:cor:graph}.

We use $\det(a,b)=a_1b_2-a_2b_1$ for $a,b\in\R^2$, and $J(x,y)=(-y,x)$. In the construction, all parameters have period $24$, and $\Torus=\R/(24\mathbb Z)$. For a parametrized surface $F$ with unit normal $N$, our convention for the second fundamental form is $\II_{ij}=\langle F_{ij},N\rangle$.

\section{Construction of the planar curves}

The relation between the planar curves and the spatial construction explains the two conditions below. Suppose that $P,Q:\Torus\to\R^2$ are smooth regular closed curves with $Q'=fP'$ for a positive function $f$. We seek a lift $\gamma=(P,z)$ with unit normal
\[
	n=\frac{(-JQ,1)}{\sqrt{1+|Q|^2}}.
\]
The condition $\langle n,\gamma'\rangle=0$ forces
\[
	z'=\det(Q,P').\eqnum\label{add:eq:height}
\]
Moreover, $Q'=fP'$ gives $\langle n,\gamma''\rangle=0$. Thus the normal and asymptotic conditions are satisfied before any choice of the initial height. The remaining condition is that the lift close up:
\[
	z(24)-z(0)=I(P,Q),\qquad I(P,Q)\coloneqq\int_0^{24}\det(Q,P')dt.\eqnum\label{add:eq:period}
\]
In particular, $I(P,Q)$ measures the net change in height after one circuit. Simplicity of $P$ will ensure that the lift is embedded, while simplicity of $Q$ will ensure that its normal curve is embedded in the upper hemisphere.

The vanishing of this integral rules out some otherwise natural choices of planar curves. Following \cite{GR}, a planar curve is locally star-shaped if there is a point of the plane that lies on none of its tangent lines.

\begin{lemma}\label{add:lem:tangent-lines}
	Suppose that $P,Q:\Torus\to\R^2$ are smooth regular closed curves, $Q'=fP'$ with $f>0$, and $I(P,Q)=0$. The tangent lines of $P$ cover $\R^2$, as do the tangent lines of $Q$. In particular, neither curve is locally star-shaped, and neither can be the boundary of a convex domain.
\end{lemma}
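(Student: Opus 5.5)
Fix a point $p\in\R^2$ and argue by contradiction: assume $p$ lies on no tangent line of $P$ (respectively of $Q$). The tangent line of $P$ at $t$ contains $p$ exactly when $\det(P(t)-p,P'(t))=0$. The plan is to show that $I(P,Q)$ is unchanged by translating $P$ or $Q$, and then that the integrand has a fixed sign once translated by $p$.

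\emph{Step 1: translation invariance.} Replacing $Q$ by $Q-q$ changes the integral by $-\int_0^{24}\det(q,P')\,dt=-\det(q,P(24)-P(0))=0$, because $P$ is closed. Replacing $P$ by $P-p$ does not change $P'$, so the integral is unchanged. Likewise, since $Q'=fP'$, integration by parts gives
\[
	\int\det(Q,P')\,dt=-\int\det(Q',P)\,dt=-\int f\det(P',P)\,dt=\int f\det(P,P')\,dt,
\]
so $I(P,Q)=\int_0^{24} f\det(P,P')\,dt$. This identity also holds after any translation of $P$, by the same argument with $Q$ closed.

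\emph{Step 2: the tangent lines of $P$.} Suppose $p$ lies on no tangent line of $P$. Translate so that $p=0$; by Step 1 the integral is still $I(P,Q)=\int f\det(P,P')\,dt$. The function $\det(P,P')$ is continuous on the connected circle $\Torus$ and never vanishes, so it has a constant sign. Since $f>0$, the integral is nonzero, which contradicts $I(P,Q)=0$.

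\emph{Step 3: the tangent lines of $Q$.} Suppose $q$ lies on no tangent line of $Q$. The tangent lines of $Q$ and $P$ at $t$ are parallel, since $Q'=fP'$. Translate $Q$ so that $q=0$; by Step 1, $I=\int\det(Q,P')\,dt=\int f^{-1}\det(Q,Q')\,dt$. Here $\det(Q,Q')$ never vanishes, so it has constant sign, and $f^{-1}>0$. Hence $I\neq0$, again a contradiction.

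\emph{Step 4: the consequences.} A locally star-shaped curve has, by definition, a point off all of its tangent lines, so neither curve is locally star-shaped. For a regular curve bounding a convex domain, any interior point lies on no tangent line, because each tangent line is a supporting line and therefore misses the interior. So neither curve can bound a convex domain.

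The only delicate point is the sign bookkeeping in the integration by parts in Step 1, and checking that each translation really leaves $I$ unchanged. Once that is done, the argument is immediate.
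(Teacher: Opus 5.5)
Your proof is correct and follows essentially the same route as the paper, which likewise writes $I(P,Q)=\int_0^{24}f\det(P-a,P')\,dt=\int_0^{24}f^{-1}\det(Q-b,Q')\,dt$ for arbitrary $a,b$, using $\int_0^{24} fP'=\int_0^{24}Q'=0$ and $\int_0^{24} f^{-1}Q'=\int_0^{24}P'=0$, and then gets a contradiction from the fixed sign of the integrand when the point lies on no tangent line. Your translation-invariance phrasing is equivalent to this, and your treatment of the star-shaped and convex consequences matches the paper's closing remark.
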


\begin{proof}
	Integration by parts gives $I(P,Q)=\int_0^{24}\det(P,Q')\dd t$. Since both curves are closed, for every $a,b\in\R^2$ we have
	\[
		\begin{aligned}
			I(P,Q)&=\int_0^{24}f(t)\det(P(t)-a,P'(t))dt,\\
			I(P,Q)&=\int_0^{24}\frac{1}{f(t)}\det(Q(t)-b,Q'(t))dt.
		\end{aligned}
		\eqnum\label{add:eq:weighted-period}
	\]
	Indeed, the terms containing $a$ and $b$ vanish because
	\[
	\int_0^{24}fP'=\int_0^{24}Q'=0\quad\text{and}\quad\int_0^{24}f^{-1}Q'=\int_0^{24}P'=0.
	\]
	If $a$ lies on no tangent line of $P$, then $\det(P-a,P')$ is continuous and never zero. It therefore has a fixed sign, contradicting the first identity in \eqref{add:eq:weighted-period} and $f>0$. The second identity proves the same assertion for $Q$. Finally, an interior point of a convex domain lies on none of the tangent lines to its smooth boundary.
\end{proof}

For the projected core $P$, this agrees with the obstruction in \cite[Theorem 1.3]{GR}. Here the same integral argument applies to both planar curves. The nonconvex shapes used in the construction are therefore dictated by the period condition, rather than merely by the choice of polygonal coordinates.

\begin{figure}[H]
\centering
\begin{minipage}[c]{0.49\linewidth}
\centering
\begin{tikzpicture}[x=0.62cm,y=1.1cm,font=\scriptsize,>=Stealth]
\draw[thick] (0,0)--(6,0)--(6,-1)--(7,-1)--(7,1.1)--(6,1.1)--(6,0.1)--(0,0.1)--(0,1.1)--(-1,1.1)--(-1,-1)--(0,-1)--cycle;
\draw[->] (2.3,0)--(3.3,0);
\draw[->] (3.6,0.1)--(2.6,0.1);
\draw[->] (7,-0.4)--(7,0.3);
\foreach \i/\x/\y/\pos in {0/0/0/below right,1/6/0/below left,2/6/-1/below,3/7/-1/below,4/7/1.1/above,5/6/1.1/above,6/6/0.1/above left,7/0/0.1/above right,8/0/1.1/above,9/-1/1.1/above,10/-1/-1/below,11/0/-1/below}{
  \fill (\x,\y) circle[radius=0.7pt];
  \node[\pos,inner sep=2pt] at (\x,\y) {$p_{\i}$};
}
\node at (3,-0.48) {two edges of length $L$};
\node at (3,1.65) {(a) The polygon for $P$};
\end{tikzpicture}
\end{minipage}\hfill
\begin{minipage}[c]{0.49\linewidth}
\centering
\begin{tikzpicture}[x=0.56cm,y=0.56cm,font=\scriptsize,>=Stealth]
\draw[thick] (0,0)--(2,0)--(2,-1)--(6,-1)--(6,3)--(5,3)--(5,-0.5)--(3,-0.5)--(3,2)--(-2,2)--(-2,-3)--(0,-3)--cycle;
\draw[->] (0.5,0)--(1.3,0);
\draw[->] (6,0.2)--(6,1.2);
\draw[->] (1.6,2)--(0.6,2);
\foreach \i/\x/\y/\pos in {0/0/0/above left,1/2/0/above,2/2/-1/below,3/6/-1/below right,4/6/3/above right,5/5/3/above left,6/5/-0.5/above left,7/3/-0.5/above right,8/3/2/above right,9/-2/2/above left,10/-2/-3/below left,11/0/-3/below right}{
  \fill (\x,\y) circle[radius=0.7pt];
  \node[\pos,inner sep=2pt] at (\x,\y) {$q_{\i}$};
}
\node at (2,4.05) {(b) The polygon for $Q$};
\end{tikzpicture}
\end{minipage}
\caption{The unrounded polygons. Edges with the same index are traversed in the same direction. The panel for $P$ is schematic: its long horizontal edges have been shortened, and its coordinate scales differ. The exact coordinates are those in Table \ref{tab:vertices}.}
\label{add:fig:polygons}
\end{figure}

\begin{proposition}\label{prop:planar}
	There are $C^\infty$ regular embeddings $P,Q:\Torus\to\R^2$ and a smooth function $f:\Torus\to(0,\infty)$ such that
	\[
		Q'=fP',\qquad\int_0^{24}\det(Q(t),P'(t))dt=0.\eqnum\label{eq:planar-data}
	\]
\end{proposition}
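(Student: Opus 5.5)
We build $P$ and $Q$ by rounding the two twelve-gons of Figure~\ref{add:fig:polygons} simultaneously. The polygons have the same edge directions in the same cyclic order; edge $i$ of each is traversed in the same direction. We parametrize both curves on $\Torus=\R/24\mathbb Z$, giving each edge a parameter interval of length $1$ and each corner a turning interval of length $1$ between consecutive edges.

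\textbf{The rounding.} Fix a smooth nondecreasing cutoff $\phi:[0,1]\to[0,1]$ that is $0$ near $0$ and $1$ near $1$. On a corner interval, where the direction turns by a right angle from $e$ to $e'$, set
\[
P'=\alpha(t)\bigl((1-\phi)e+\phi e'\bigr),\qquad Q'=\beta(t)\bigl((1-\phi)e+\phi e'\bigr),
\]
with smooth positive speed functions $\alpha,\beta$. On an edge interval, set $P'=\lambda e$ and $Q'=\mu e$ with positive constants or bump profiles. Then $Q'=fP'$ holds automatically with $f=\beta/\alpha>0$ smooth, and both curves are regular, since $(1-\phi)e+\phi e'\neq0$. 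All derivatives are flat at the junctions, so both curves are $C^\infty$.

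\textbf{Closing the curves.} The integral of the unit-direction profile over a corner is a fixed vector $c(e+e')$. Each curve therefore closes exactly when its edge lengths and corner sizes satisfy two linear equations. If we scale corners small (size $\delta$) and take edge speeds equal to the polygon edge lengths minus the corner corrections, we get closed curves uniformly $O(\delta)$-close to the polygons. The vertex data in Table~\ref{tab:vertices} already close, so this is a small linear adjustment. Because each polygon is simple with separated nonadjacent edges, the rounded curves are embedded for small $\delta$.

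\textbf{Killing the integral.} By Lemma~\ref{add:lem:tangent-lines} neither polygon can be convex, which is why the shapes are indented. We introduce one free parameter: the common length $L$ of the two long horizontal edges $p_0p_1$ and $p_6p_7$ of $P$. These edges are antiparallel and $0.1$ apart, so varying $L$ together with the corresponding positions keeps $P$ closed and simple. Write
\[
I(L)=I(P_L,Q)=\int\det(Q,P_L').
\]
It is a sum of contributions $\det(Q,\text{edge vector})$ plus $O(\delta)$ corner terms, and it is affine in $L$: the two long edges contribute $L\bigl(\det(Q|_{e_0},e_1)-\det(Q|_{e_6},e_1)\bigr)$. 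On those edges $Q$ traverses the segments at heights $0$ and $-0.5$ in direction $e_1$. The slope is therefore proportional to the height difference of $Q$ between the two corresponding $Q$-edges, which is nonzero. Since $I$ depends continuously and affinely on $L$, a sign change between two admissible values gives a root by the intermediate value theorem. The table's value $L\approx43.6$ is the root.

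The main obstacle is this last step. We must verify, by explicit evaluation of the polygonal mixed integral plus a bound on the $O(\delta)$ rounding error, that $I$ takes opposite signs at two lengths keeping $P$ simple. The bound has to be uniform so that the root persists after rounding. If that estimate is tight, the alternative is to note that $I$ is exactly affine in $L$ even after rounding, because rounding only modifies corners that do not depend on $L$. The root can then be solved for exactly.
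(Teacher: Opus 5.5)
Your construction follows the paper's route. You round both twelve-gons with a common turning profile so that $Q'=fP'$, vary only the common length $L$ of the two antiparallel long edges of $P$, and observe that $I$ is affine in $L$ with slope $-1/2$; your slope computation from the $Q$-heights $0$ and $-1/2$ is correct. The gap is that you never evaluate the $L$-independent part. The sentence ``the table's value $L\approx43.6$ is the root'' is an assertion, and you yourself list this verification as the main obstacle.

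That evaluation is the content of the proposition. An affine function with nonzero slope always has a real root. The only question is whether the root lies at an $L$ where $P$ is simple, with separation bounds uniform in $L$; in particular it must have $L>0$. If the constant term were $\le 0$, this family would fail. The computation is short. Along the $i$th edge $Q$ moves parallel to $e_i$, so $\det(Q,e_i)\equiv\det(q_i,e_i)$ there. Using $\det(q,E)=-q_2$, $\det(q,S)=-q_1$, $\det(q,N)=q_1$ and $\det(q,W)=q_2$, one gets
\[
I_0(L)=\sum_{i=0}^{11}a_i\det(q_i,e_i)=0-2+1+\tfrac{63}{5}+3-5-\tfrac L2+3+2+\tfrac{21}{5}+3+0=\tfrac{109}{5}-\tfrac L2 ,
\]
so the polygonal root is $L=218/5$.

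You also need a quantitative bound on the corner terms, which you leave as ``$O(\delta)$''. Your fallback is the right way to finish, and it is what the paper does. The corners do not depend on $L$, so after rounding $I_\delta(L)=109/5-L/2+R_\delta$ with $R_\delta$ independent of $L$, and the root is $L_*=218/5+2R_\delta$. You must then bound $R_\delta$ so that $L_*$ stays in a fixed window where your simplicity estimates hold; the intermediate value argument is unnecessary.

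The paper puts the same arc $p_i+\delta r_i$, $q_i+\delta r_i$ at corresponding corners. It shows that the order-$\delta$ terms cancel, since $\sum_i\det(q_i,e_{i-1})=\sum_i\det(q_{i-1},e_{i-1})$ by $q_i-q_{i-1}\parallel e_{i-1}$. This leaves $R_\delta=\delta^2C$ with $|C|<36$, so $L_*\in[43,44]$ for $\delta=1/100$. In your setup a crude bound $|R_\delta|=O(\delta)$ suffices if $\delta$ is taken small, but it has to be stated.

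Three smaller points need repair.
\begin{itemize}
\item $C^\infty$ matching at the joins needs the corner and edge speeds to agree to all orders there. Constant edge speeds near $43.6$ on the long edges are incompatible with corners of size $\delta$, so you need bump-profile speeds such as the paper's $\delta c+(a_i-2\delta-\delta c)\psi$.
\item The corner displacement equals $c(e+e')$ only if $\int_0^1\phi=1/2$.
\item Embeddedness near a corner needs a local argument. In the basis $(e,e')$ both coordinates of the arc are monotone and their sum is strictly increasing, so the arc is simple and meets its retained edges only at its endpoints.
\end{itemize}
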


\begin{proof}
	For $L\in[43,44]$, consider the polygons with cyclic vertices $p_i=p_i(L)$ and $q_i$ in Table~\ref{tab:vertices}. The edge directions are $E=(1,0)$, $S=(0,-1)$, $N=(0,1)$, and $W=(-1,0)$. Indices are taken modulo $12$.
	
\begin{table}[H]
\centering
\renewcommand{\arraystretch}{1.15}
\begin{tabular}{c|c|c|c|c|c}
$i$ & $p_i$ & $q_i$ & $e_i$ & $a_i$ & $b_i$\\ \hline
0 & $(0,0)$ & $(0,0)$ & $E$ & $L$ & $2$\\
1 & $(L,0)$ & $(2,0)$ & $S$ & $1$ & $1$\\
2 & $(L,-1)$ & $(2,-1)$ & $E$ & $1$ & $4$\\
3 & $(L+1,-1)$ & $(6,-1)$ & $N$ & $21/10$ & $4$\\
4 & $(L+1,11/10)$ & $(6,3)$ & $W$ & $1$ & $1$\\
5 & $(L,11/10)$ & $(5,3)$ & $S$ & $1$ & $7/2$\\
6 & $(L,1/10)$ & $(5,-1/2)$ & $W$ & $L$ & $2$\\
7 & $(0,1/10)$ & $(3,-1/2)$ & $N$ & $1$ & $5/2$\\
8 & $(0,11/10)$ & $(3,2)$ & $W$ & $1$ & $5$\\
9 & $(-1,11/10)$ & $(-2,2)$ & $S$ & $21/10$ & $5$\\
10 & $(-1,-1)$ & $(-2,-3)$ & $E$ & $1$ & $2$\\
11 & $(0,-1)$ & $(0,-3)$ & $N$ & $1$ & $3$
\end{tabular}
\caption{The vertices and edge lengths of the two polygons: $p_{i+1}-p_i=a_i e_i$ and $q_{i+1}-q_i=b_i e_i$.}
\label{tab:vertices}
\end{table}

	Both polygons are simple, with separation bounds independent of $L$. For the first polygon, the two long horizontal edges lie at heights $0$ and $1/10$. The other edges lie in the rectangles
	\[
		[-1,0]\times[-1,11/10],\qquad[L,L+1]\times[-1,11/10].
	\]
	The vertical portions on each of the lines $x=0$ and $x=L$ are separated by a gap of $1/10$. It follows by comparing the edges that nonadjacent edges are at distance at least $1/10$, as is any vertex from a nonincident edge. For the second polygon, the horizontal and vertical edges are listed below by their supporting coordinates and intervals:
\[
\begin{array}{c|rrrrrr}
\text{height}&0&-1&3&-1/2&2&-3\\
\text{interval}&[0,2]&[2,6]&[5,6]&[3,5]&[-2,3]&[-2,0],\\[3pt]
\text{abscissa}&2&6&5&3&-2&0\\
\text{interval}&[-1,0]&[-1,3]&[-1/2,3]&[-1/2,2]&[-3,2]&[-3,0].
\end{array}
\]
	These intervals give the corresponding separation bound $1/2$. Adjacent edges of either polygon are orthogonal and meet only at their common vertex. We round corresponding corners in the same way. Choose a nonnegative smooth function $w$, supported in $[1/4,3/4]$ and symmetric about $1/2$, with $\int_0^1w=\pi/2$. Set
	\[
		\theta(u)=\int_0^u w(s)ds,\qquad h=\int_0^1\cos\theta(u)du,\qquad c=h^{-1}.
	\]
	Symmetry gives $\theta(1-u)=\pi/2-\theta(u)$, so $\int_0^1\sin\theta=h$. Since $\cos\theta+\sin\theta\geq1$, we have $c\leq2$. At the $i$-th corner use the arc
	\[
		r_i(u)=-e_{i-1}+c\int_0^u\left(\cos\theta(s)e_{i-1}+\sin\theta(s)e_i\right)ds,\qquad 0\leq u\leq1.\eqnum\label{eq:rounding}
	\]
	It satisfies
	\[
		r_i(0)=-e_{i-1},\qquad r_i(1)=e_i,\qquad |r_i'|=c.
	\]
	Near its endpoints, the velocity is respectively $ce_{i-1}$ and $ce_i$. In the orthonormal basis $(e_{i-1},e_i)$, the two coordinates of $r_i$ lie in $[-1,0]\times[0,1]$, and their sum is strictly increasing. Thus $r_i$ is simple and $|r_i|\leq\sqrt2$.\par
	Fix $\delta=1/100$. Replace the corners at $p_i$ and $q_i$ by $p_i+\delta r_i$ and $q_i+\delta r_i$, and retain the straight edges between their endpoints. The disks of radius $2\delta$ about the vertices of either polygon are disjoint and meet no nonincident edge. Each replacement arc lies in its vertex disk and, by the coordinate monotonicity just noted, meets its retained incident edges only at the endpoints. The rounded curves are therefore simple. Give each corner and the following straight segment a parameter interval of length $1$. On a corner use the parametrization above. On the $i$th straight segment choose a smooth nonnegative function $\psi$, compactly supported in $(0,1)$ with $\int_0^1\psi=1$, and prescribe the velocities
	\[
		\begin{aligned}
			P'&=\alpha_i(u)e_i,&
			\alpha_i(u)&=\delta c+(a_i-2\delta-\delta c)\psi(u),\\
			Q'&=\beta_i(u)e_i,&
			\beta_i(u)&=\delta c+(b_i-2\delta-\delta c)\psi(u).
		\end{aligned}
	\]
	Their integrals are $a_i-2\delta$ and $b_i-2\delta$, the lengths of the retained edges. Since $a_i,b_i\geq1$ and $c\leq2$, both speeds are positive. Near every join the velocities agree and are constant, so the resulting periodic parametrizations $P$ and $Q$ are smooth and regular. On corners $Q'=P'$, and on straight segments $Q'=(\beta_i/\alpha_i)P'$. Hence $Q'=fP'$ for a smooth positive function $f$, equal to $1$ near all joins. It remains to choose $L$ so that the mixed integral vanishes. For the polygons before rounding, its value is
	\[
		\begin{aligned}
			I_0(L)&=\sum_{i=0}^{11}a_i\det(q_i,e_i)\\
			&=0-2+1+\frac{63}{5}+3-5-\frac L2+3+2+\frac{21}{5}+3+0\\
			&=\frac{109}{5}-\frac L2.
		\end{aligned}
		\eqnum\label{eq:polygon-period}
	\]
	This is independent of the speeds along corresponding edges, since displacement along an edge of $Q$ is parallel to $dP$. For the rounded curves, the $i$th straight segment contributes $(a_i-2\delta)\det(q_i,e_i)$, while the $i$th corner contributes
	\[
		\delta\det(q_i,e_{i-1}+e_i)+\delta^2\int_0^1\det(r_i,r_i')du.
	\]
	The terms linear in $\delta$ cancel because
	\[
		\sum_i\det(q_i,e_{i-1})=\sum_i\det(q_{i-1},e_{i-1})=\sum_i\det(q_i,e_i).
	\]
	The first equality uses $q_i-q_{i-1}=b_{i-1}e_{i-1}$. Consequently,
	\[
		I_\delta(L)=\frac{109}{5}-\frac L2+\delta^2C,\qquad C=\sum_{i=0}^{11}\int_0^1\det(r_i,r_i')du.\eqnum\label{eq:smooth-period}
	\]
	The constant $C$ is independent of $L$, and
	\[
		|C|\leq12\sqrt2\,c\leq24\sqrt2<36.
	\]
	Choose
	\[
		L_*=\frac{218}{5}+2\delta^2C.\eqnum\label{eq:L-choice}
	\]
	Then $|L_*-218/5|<0.0072$, so $L_*\in[43,44]$ and all the preceding separation and regularity estimates apply. Equation \eqref{eq:smooth-period} gives $I_\delta(L_*)=0$, as required.
\end{proof}

The role of the long edges is particularly simple. In the unrounded pair, edge $0$ of $P$ has length $L$ but contributes zero to the mixed integral, since $q_0=(0,0)$. Edge $6$ also has length $L$, but its contribution is $-L/2$, since $q_6=(5,-1/2)$ and $e_6=(-1,0)$. All other contributions are independent of $L$. Thus, once the remaining edges have been chosen, varying $L$ changes the net height with slope $-1/2$. Figure~\ref{add:fig:polygons} shows the two polygons and their corresponding vertices. Rounding the two polygons independently would not in general preserve $Q'=fP'$. The common arc $r_i$ avoids this difficulty: on corresponding rounded corners the velocities are equal, and on the retained straight edges they are positive multiples of the same fixed vector. The endpoint velocities are constant on small intervals, so the pieces fit smoothly. Figure~\ref{add:fig:rounding} shows this replacement in the coordinates of the two incident edges.\par
There is also a useful distinction between the two effects of rounding on the period. The terms of order $\delta$ cancel after summing over the vertices, while the remaining term is $\delta^2C$, independent of $L$. Hence the adjustment from $218/5$ to $L_*$ in~\eqref{eq:L-choice} restores the period exactly. No additional connecting arc is needed in the spatial lift. For the drawings we use the following particular choice of the otherwise arbitrary function $w$:
\[
	\eta(u)=
	\begin{cases}
		\exp\!\left(-\dfrac{1}{1-16(u-1/2)^2}\right),& |u-1/2|<1/4,\\[4pt]
		0,& |u-1/2|\geq1/4,
	\end{cases}
	\qquad
	w(u)=\frac{\pi\eta(u)}{2\int_0^1\eta(s)ds}.\eqnum\label{add:eq:drawing-bump}
\]
This function has the support and symmetry required in the proof. Numerical integration gives $C\approx-0.2743328944$ and $L_*\approx43.5999451334$ for this choice. These approximations are used only to draw the curves; the construction and the closure condition use the exact value in \eqref{eq:L-choice}.

\begin{figure}[H]
\centering
\begin{tikzpicture}[x=2.1cm,y=2.1cm,font=\small,>=Stealth]
\draw[thick,->] (-1.65,0)--(-1,0);
\draw[thick,->] (0,1)--(0,1.65);
\draw[densely dashed] (-1,0)--(0,0)--(0,1);
\draw[very thick] (-1,0)
  foreach \ACx/\ACy/\ACa in \ACcornerdata {--(\ACx,\ACy)};
\fill (-1,0) circle[radius=1pt];
\fill (0,0) circle[radius=1pt];
\fill (0,1) circle[radius=1pt];
\node[below,inner sep=4pt] at (-1,0) {$v_i-\delta e_{i-1}$};
\node[right,inner sep=4pt] at (0,1) {$v_i+\delta e_i$};
\node[below right] at (0,0) {$v_i$};
\node at (-0.75,0.75) {$v_i+\delta r_i$};
\draw[->] (-0.56,0.64)--(-0.16,0.17);
\node[above] at (-1.42,0) {$e_{i-1}$};
\node[right] at (0,1.43) {$e_i$};
\end{tikzpicture}
\caption{The common corner replacement, with $v_i=p_i$ or $q_i$. The picture uses the local orthonormal coordinates $(e_{i-1},e_i)$ and scales $\delta$ to one. Dashed segments are removed; the rounded arc joins the retained incident edges with constant velocity near its endpoints. The displayed arc is sampled from~\eqref{add:eq:drawing-bump}.}
\label{add:fig:rounding}
\end{figure}

\section{Construction of the annulus}

We first record two elementary facts about thin neighborhoods of a compact curve. The uniformity in the first lemma will also be used to compute the linking number.

\begin{lemma}\label{lem:ribbon}
	Let $\gamma:\Sph^1\to\R^3$ be a smooth regular embedding, and let $V(t,r)$, $0\leq r\leq1$, be a smooth family of vector fields along $\gamma$. Suppose that $\gamma'(t)$ and $V(t,r)$ are linearly independent for every $(t,r)$. There is $a>0$, independent of $r$, such that
	\[
		F_r(t,u)=\gamma(t)+uV(t,r)
	\]
	is a smooth embedding of $\Sph^1\times[-a,a]$ for every $r$.
\end{lemma}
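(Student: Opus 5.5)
We prove Lemma~\ref{lem:ribbon} with a uniform tubular-neighbourhood argument that combines compactness of $K=\Sph^1\times[0,1]$ with the inverse function theorem. Consider the map
\[
	G(t,u,r)=\bigl(F_r(t,u),r\bigr)=\bigl(\gamma(t)+uV(t,r),\,r\bigr)
\]
on $\Sph^1\times\R\times[0,1]$. For each fixed $r$, the map $F_r$ is an embedding of $\Sph^1\times[-a,a]$ exactly when it is an injective immersion, since the domain is compact. We will therefore obtain two uniform facts: a uniform bound giving immersion, and a uniform bound giving injectivity.

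\emph{Step 1 (uniform immersion).} We have $\partial_tF_r=\gamma'+uV_t$ and $\partial_uF_r=V$. By hypothesis $|\gamma'\times V|>0$ on the compact set $K$, so it has a positive minimum $m$. Since $V_t$ and $V$ are bounded on $K$, we get $|(\gamma'+uV_t)\times V|\geq m-|u|\sup|V_t||V|$. Choosing $a_1>0$ small therefore keeps this quantity at least $m/2$ for $|u|\leq a_1$ and all $r$.

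\emph{Step 2 (uniform local injectivity).} By Step 1 and compactness of $\Sph^1\times[-a_1,a_1]\times[0,1]$, the derivatives of $F_r$ are uniformly bounded and uniformly continuous, and the differential has a uniform lower bound. A quantitative inverse function argument then gives $\rho>0$, independent of $r$, with the following property: whenever $|t-s|+|u-v|<\rho$ and $F_r(t,u)=F_r(s,v)$, we have $(t,u)=(s,v)$. One way to obtain this is to write $F_r(t,u)-F_r(s,v)$ as the integral of $DF_r$ along the segment joining the two points and compare it with $DF_r$ at one endpoint. Here one uses that $|DF_r\,\xi|\geq c|\xi|$ uniformly, which follows from the cross-product bound together with upper bounds on $|\gamma'+uV_t|$ and $|V|$.

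\emph{Step 3 (uniform global injectivity).} Because $\gamma$ is an embedding of a compact curve, $\mu=\min\{|\gamma(t)-\gamma(s)|:\operatorname{dist}(t,s)\geq\rho/2\}>0$. Let $B=\sup_K|V|$. If $|u|,|v|\leq a$ and $\operatorname{dist}(t,s)\geq\rho/2$, then
\[
	|F_r(t,u)-F_r(s,v)|\geq\mu-2aB>0
\]
provided $a<\mu/(2B)$. If instead $\operatorname{dist}(t,s)<\rho/2$ and $|u|,|v|\leq a<\rho/4$, Step 2 applies. Hence
\[
	a=\min\bigl(a_1,\ \rho/4,\ \mu/(4B)\bigr)
\]
works for every $r\in[0,1]$, and it is independent of $r$.

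The main obstacle is the uniformity in $r$ in Step 2. The cleanest way I would handle it is the contradiction argument. Suppose there are $r_k$, $a_k\to0$ and distinct points $(t_k,u_k)\neq(s_k,v_k)$ with $F_{r_k}(t_k,u_k)=F_{r_k}(s_k,v_k)$ and $|u_k|,|v_k|\leq a_k$. Pass to subsequences so that $r_k\to r$, $t_k\to t$ and $s_k\to s$, and note $u_k,v_k\to0$.
\begin{itemize}
\item If $t\neq s$, the limit gives $\gamma(t)=\gamma(s)$, which contradicts the embedding hypothesis.
\item If $t=s$, the points converge to $(t,0,r)$. The map $G$ has invertible differential at $(t,0,r)$ in the $(t,u)$ variables, so by the inverse function theorem applied to $G$ it is injective on a neighbourhood of $(t,0,r)$. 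This contradicts $G(t_k,u_k,r_k)=G(s_k,v_k,r_k)$ for large $k$.
\end{itemize}
This compactness argument replaces Steps 2 and 3 simultaneously. Combined with Step 1, it yields the required uniform $a$.
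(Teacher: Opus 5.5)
Your final argument (Step 1, then the compactness and contradiction argument in place of Steps 2--3) is essentially the paper's proof; your quantitative Steps 2--3 would also work as an alternative. The one imprecision is that $G$ maps a three-dimensional domain into $\R^4$, so its differential is injective rather than invertible and the inverse function theorem does not apply to $G$ verbatim. You can fix this by citing local injectivity of immersions, or, as the paper does, by composing with a linear projection $\pi:\R^3\to\R^2$ that is injective on the span of $\gamma'(t)$ and $V(t,r)$, so that $(t,u,r)\mapsto(\pi F_r(t,u),r)$ has invertible differential, and by extending $V$ smoothly past $r=0,1$ when the limit $r$ is an endpoint.
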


\begin{proof}
	The rank condition holds along $u=0$, hence on a uniform strip by compactness. If there were no uniform injective strip, we could find $r_j\to r_0$ and distinct pairs $(t_j,u_j),(s_j,v_j)$ with $u_j,v_j\to0$ and equal $F_{r_j}$-images. Passing to subsequences gives $t_j\to t_0$ and $s_j\to s_0$. The limiting equality $\gamma(t_0)=\gamma(s_0)$ implies $t_0=s_0$. Choose a linear projection $\pi:\R^3\to\R^2$ that is injective on the plane spanned by $\gamma'(t_0)$ and $V(t_0,r_0)$. In a local coordinate on the circle, the map
	\[
		(t,u,r)\longmapsto\left(\pi F_r(t,u),r\right)
	\]
	has invertible differential at $(t_0,0,r_0)$. The inverse function theorem contradicts the chosen collisions. At $r_0=0$ or $1$, apply it after a local smooth extension in $r$. Thus a uniform strip is injective. Since the closed strip is compact, each restriction is an embedding.
\end{proof}

\begin{lemma}\label{lem:normal-neighborhood}
	Let $Y$ be a smooth surface and $H:\Sph^1\times(-b,b)\to Y$ a smooth map. If $H(t,0)$ is injective in $t$ and $DH$ has rank two along $u=0$, then $H$ is an injective immersion on $\Sph^1\times[-a,a]$ for some $0<a<b$.
\end{lemma}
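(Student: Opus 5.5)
The proof has two parts, in the same spirit as Lemma~\ref{lem:ribbon}: first a uniform immersion strip, then a compactness-and-contradiction argument for injectivity.

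\emph{Immersion.} The set of points $(t,u)\in\Sph^1\times(-b,b)$ where $DH$ has rank two is open. By hypothesis it contains the compact circle $\Sph^1\times\{0\}$. By compactness (a tube lemma argument), it therefore contains a strip $\Sph^1\times[-a_0,a_0]$ for some $0<a_0<b$. Since $Y$ is two-dimensional, rank two means that $H$ is a local diffeomorphism on this strip. The inverse function theorem then gives each point $(t,u)$ of the strip a neighborhood on which $H$ is injective.

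\emph{Injectivity.} Suppose that for no $a\in(0,a_0]$ is $H$ injective on $\Sph^1\times[-a,a]$. Then there are distinct pairs $(t_j,u_j)\neq(s_j,v_j)$ with $|u_j|,|v_j|\le 1/j$ and $H(t_j,u_j)=H(s_j,v_j)$. Pass to subsequences so that $t_j\to t_0$ and $s_j\to s_0$. By continuity, $H(t_0,0)=H(s_0,0)$, and injectivity of $H(\cdot,0)$ forces $t_0=s_0$. Hence both sequences of points converge to $(t_0,0)$. Eventually both lie in the neighborhood of $(t_0,0)$ on which $H$ is injective, by the local diffeomorphism property. This contradicts $(t_j,u_j)\neq(s_j,v_j)$.

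So for some $0<a\le a_0<b$, the restriction of $H$ to $\Sph^1\times[-a,a]$ is an injective immersion. The only delicate point is passing from local to global injectivity, and the diagonal subsequence argument handles it using compactness of $\Sph^1$ and injectivity on the core circle. If an embedding is wanted, it follows because the closed strip is compact and $Y$ is Hausdorff, so the continuous injection is a homeomorphism onto its image.
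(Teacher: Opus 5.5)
Your proposal is correct and follows essentially the same route as the paper. Compactness gives a uniform strip on which $DH$ has rank two; then collisions shrinking to the core converge, by injectivity of $H(\cdot,0)$, to a single core point, where local injectivity from the inverse function theorem yields the contradiction, and your write-up fills in the tube-lemma and subsequence details the paper leaves implicit.
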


\begin{proof}
	Compactness gives the rank condition on a smaller strip. If no smaller strip were injective, collisions would converge to a pair of points of the core with the same image. Injectivity on the core makes their limits equal, contradicting the inverse function theorem in a surface chart at that point.
\end{proof}

\begin{proposition}\label{prop:annulus}
	Let $P,Q:\Torus\to\R^2$ be smooth regular embeddings satisfying~\eqref{eq:planar-data}. Define
	\[
		z(t)=\int_0^t\det(Q(s),P'(s))ds,\qquad\gamma(t)=(P(t),z(t)),\qquad n(t)=\frac{(-JQ(t),1)}{\sqrt{1+|Q(t)|^2}}.\eqnum\label{eq:lift}
	\]
	Set $v=|\gamma'|$, $T=\gamma'/v$, and $\nu=n\times T$. For some $a>0$, the ruled map
	\[
		F(t,u)=\gamma(t)+u\nu(t)\eqnum\label{eq:ruled-surface}
	\]
	embeds $\Torus\times[-a,a]$. Its open annulus has strictly negative Gaussian curvature and an injective Gauss map agreeing with $n$ along $\gamma$, and $\gamma$ is asymptotic.
\end{proposition}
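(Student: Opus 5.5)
The plan is to verify the properties of $F$ in order: closure and embeddedness of the core, the frame along $\gamma$, asymptoticity, negative curvature near the core, and finally injectivity of the Gauss map, each time shrinking $a$ if needed.

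\textbf{The core and the frame.} Since $I(P,Q)=0$ by \eqref{eq:planar-data}, the height $z$ in \eqref{eq:lift} is $24$-periodic, so $\gamma$ is a smooth closed curve. It is regular and simple because its projection $P$ is. The field $n$ is a unit vector. By \eqref{add:eq:height},
\[
\langle(-JQ,1),(P',z')\rangle=-\langle JQ,P'\rangle+\det(Q,P')=0,
\]
so $n\perp\gamma'$. Hence $\nu=n\times T$ is a unit field orthogonal to $T$, and $\gamma',\nu$ are linearly independent. Lemma~\ref{lem:ribbon}, applied with $V(t,r)=\nu(t)$ independent of $r$, gives $a>0$ such that $F$ embeds $\Torus\times[-a,a]$. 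Along $u=0$ we have $F_t\times F_u=vT\times(n\times T)=vn$, so the unit normal $N$ of $F$ restricts to $n$ on $\gamma$.

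\textbf{Asymptoticity.} Differentiating $\langle\gamma',n\rangle=0$ gives $\langle\gamma'',n\rangle=-\langle\gamma',n'\rangle$. Write $n'=(1+|Q|^2)^{-1/2}(-JQ',0)+\lambda n$ for some scalar $\lambda$. The $\lambda n$ term pairs to zero with $\gamma'$. The remaining term pairs to
\[
-\langle P',JQ'\rangle=-f\langle P',JP'\rangle=0.
\]
Thus $\gamma$ is asymptotic, and also $\langle n',T\rangle=0$. As $n'\perp n$, this forces $n'$ to be parallel to $\nu$.

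The map $x\mapsto(-Jx,1)/\sqrt{1+|x|^2}$ is a diffeomorphism of $\R^2$ onto the open upper hemisphere; it is the inverse of central projection composed with the rotation $-J$. Since $n$ is this map composed with $Q$, and $Q$ is a regular embedding, $n$ is a regular embedding. In particular $n'\neq0$, so $\langle\nu,n'\rangle=\pm|n'|\neq0$ everywhere.

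\textbf{Curvature.} Since $F_{uu}=0$, we have $\II_{22}=0$. Therefore
\[
K=-\frac{\II_{12}^2}{EG-F^2},\qquad \II_{12}=\langle\nu',N\rangle+u\langle\nu'',N\rangle\ \text{(schematically)}.
\]
At $u=0$, $\II_{12}=\langle\nu',n\rangle=-\langle\nu,n'\rangle\neq0$, so $K<0$ along $\gamma$. By continuity and compactness of $\Torus$, $\II_{12}$ stays nonzero on $\Torus\times[-a,a]$ after shrinking $a$. Hence $K<0$ there, and in particular on the open annulus $\Torus\times(-a,a)$.

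\textbf{Injectivity of the Gauss map (main obstacle).} This is the step needing care. Apply Lemma~\ref{lem:normal-neighborhood} with $Y=\Sph^2$ and $H=N\circ F$ defined on $\Torus\times(-b,b)$ for a slightly larger $b$. Injectivity on the core is the injectivity of $n$ established above. For the rank condition, note that $dN$ at a point of the surface is the shape operator up to sign, and $\det dN=K\neq0$. Hence $dN$ is invertible, and $D(N\circ F)=dN\circ DF$ has rank two along $u=0$. The lemma then gives a smaller strip on which $N\circ F$ is an injective immersion. Replacing $a$ by the minimum of all the thresholds obtained yields the open annulus $F(\Torus\times(-a,a))$ with all the stated properties.
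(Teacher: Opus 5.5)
Your proposal is correct and follows the paper's proof step for step: Lemma~\ref{lem:ribbon} for the embedded strip, $\II_{tt}=\II_{uu}=0$ and $\II_{tu}=-\langle\nu,n'\rangle\neq0$ at the core, compactness for $K<0$, and Lemma~\ref{lem:normal-neighborhood} for injectivity of the Gauss map. The only differences are that you get $\langle\nu,n'\rangle\neq0$ from the regularity of $n$ (via $Q$ and the hemisphere diffeomorphism) rather than from the explicit value $\lambda=-fv/(1+|Q|^2)$, and the rank condition from $\det dN=K\neq0$ rather than by computing $\mathcal N_t(t,0)=\lambda\nu$ and $\mathcal N_u(t,0)=(\lambda/v)T$; your one slip is harmless, since $F_{tu}=\nu'$ exactly, so $\II_{12}=\langle\nu',N\rangle$ has no $u\langle\nu'',N\rangle$ term and your continuity argument is unaffected.
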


\begin{proof}
	The zero integral in~\eqref{eq:planar-data} makes $z$ smooth and periodic. Since $P$ is a regular embedding, so is $\gamma$. The map
	\[
		\Phi(q)=\frac{(-Jq,1)}{\sqrt{1+|q|^2}}
	\]
	is a diffeomorphism from $\R^2$ to the upper open hemisphere, with inverse $\xi\mapsto J(\xi_1,\xi_2)/\xi_3$. Thus $n=\Phi\circ Q$ is a regular embedded spherical curve and $n_3>0$. The definition of $z$ gives
	\[
		\langle n,\gamma'\rangle=0,\qquad\langle n,\gamma''\rangle=\frac{\det(Q',P')}{\sqrt{1+|Q|^2}}=0.\eqnum\label{eq:orthogonality}
	\]
	It follows that $n'$ is orthogonal to both $n$ and $T$, so $n'=\lambda\nu$. In fact,
	\[
		\lambda=-\frac{fv}{1+|Q|^2}<0.\eqnum\label{eq:lambda}
	\]
	To check this, put $W=(-JQ,1)$ and $w=|W|$. Since $W\perp\gamma'$ and $Q'=fP'$, a scalar triple product gives
	\[
		\langle W',W\times\gamma'\rangle=-f|\gamma'|^2,\qquad\langle n',\nu\rangle=\frac{\langle W',W\times\gamma'\rangle}{w^2v}=-\frac{fv}{w^2}.
	\]
	No assumption on the ordinary curvature of $\gamma$ enters this computation. Along $u=0$ we have $F_t=vT$, $F_u=\nu$, and $F_t\times F_u=vn$. Lemma \ref{lem:ribbon} therefore gives an embedded closed strip. On a neighborhood of a sufficiently small such strip, its unit normal is
	\[
		\mathcal N(t,u)=\frac{F_t\times F_u}{|F_t\times F_u|},\qquad\mathcal N(t,0)=n(t).
	\]
	At the core, the first fundamental form is $\operatorname{diag}(v^2,1)$, and the second fundamental coefficients are
	\[
		\II_{tt}=\langle\gamma'',n\rangle=0,\qquad\II_{tu}=\langle\nu',n\rangle=-\lambda,\qquad\II_{uu}=0.
	\]
	Hence the core is asymptotic, and
	\[
		K(t,0)=-\frac{\lambda^2}{v^2}=-\frac{f(t)^2}{(1+|Q(t)|^2)^2}<0.\eqnum\label{eq:negative-curvature}
	\]
	By compactness, $K<0$ on a uniform smaller strip. To obtain injectivity of the Gauss map on the whole strip, write $\nu'=bT-\lambda n$. Differentiating the normalized cross product at the core gives
	\[
		\mathcal N_t(t,0)=\lambda\nu,\qquad\mathcal N_u(t,0)=\frac{\lambda}{v}T.\eqnum\label{eq:gauss-derivatives}
	\]
	Indeed, $\partial_u(F_t\times F_u)=\nu'\times\nu=bn+\lambda T$; normalization removes the component parallel to $n$ and divides the remaining component by $v$. The two derivatives in \eqref{eq:gauss-derivatives} are independent. Since $\mathcal N(t,0)=n(t)$ is injective, Lemma \ref{lem:normal-neighborhood} makes $\mathcal N$ injective on a smaller closed strip. Choose $a>0$ so that all these conclusions hold. Then $M=F(\Torus\times(-a,a))$ is a smooth embedded open annulus, and $N(F(t,u))=\mathcal N(t,u)$ defines the required injective Gauss map on $M$.
\end{proof}

\subsection{Curvature and geodesic torsion}

The planar curves control two different aspects of the lift: $P$ controls its position, while $Q$ controls its normal. Their curvatures are also related. Put
\[
	W=(-JQ,1),\qquad D=\det(P',P''),\qquad v=|\gamma'|.
\]
Since $Q'=fP'$, differentiating \eqref{eq:lift} gives $\gamma''=(P'',\det(Q,P''))$. A direct cross product then yields
\[
	\gamma'\times\gamma''=DW=|W|D\,n,\qquad\kappa_\gamma=\frac{|W||D|}{v^3},\eqnum\label{add:eq:core-curvature}
\]
where $\kappa_\gamma$ is the ordinary curvature of the spatial curve. Thus its curvature vanishes precisely where the curvature of $P$ vanishes. The two planar curves have their inflections at the same parameters, since
\[
	\det(Q',Q'')=f^2\det(P',P'').
\]
To compare this with the curvature of the surface, let $s$ denote arc length along $\gamma$. With $\nu=n\times T$, define $\kappa_g$ and $\tau_g$ by the Darboux equations
\[
	T_s=\kappa_g\nu,\qquad\nu_s=-\kappa_gT+\tau_g n,\qquad n_s=-\tau_g\nu.\eqnum\label{add:eq:darboux}
\]
These conventions agree with \cite[Section 2.2]{GR}. Equations \eqref{eq:lambda} and \eqref{add:eq:core-curvature} give
\[
	\kappa_g(s(t))=\frac{|W(t)|D(t)}{v(t)^3},\qquad\tau_g(s(t))=\frac{f(t)}{1+|Q(t)|^2}>0.\eqnum\label{add:eq:core-invariants}
\]
In particular, the vanishing of the ordinary curvature does not force the geodesic torsion to vanish. If $\sigma$ is arc length along $n$, then $\dd\sigma/\dd s=\tau_g$ and its unit tangent is $-\nu$. Its signed geodesic curvature in the sphere is consequently
\[
	\widetilde\kappa_g=\frac{\kappa_g}{\tau_g},
\]
as in \cite[Note 5.1]{GR}. On a retained straight interval, write $P'=\alpha e$ and $Q'=\beta e$, where $e$ is a fixed unit vector and $\alpha,\beta>0$. The quantity $\det(Q,e)$ is constant, so \eqref{eq:lift} becomes
\[
	\gamma'=\alpha\bigl(e,\det(Q,e)\bigr).
\]
The vector in parentheses is constant. Hence the corresponding part of $\gamma$ is a straight line segment, despite the possible variation of its speed. On the same interval $Q=q+\rho e$ for a fixed $q$, and therefore
\[
	(-JQ,1)=(-Jq,1)+\rho(-Je,0).
\]
The two vectors on the right are linearly independent and span a fixed plane through the origin. Thus $n$ traces a great-circle arc, and it does so regularly because $\tau_g>0$. A straight segment of the spatial core is therefore accompanied by a moving normal, rather than by a constant normal.\par
The straight pieces serve two purposes in the construction. They allow the speeds of $P$ and $Q$ to be adjusted independently without changing their tangent directions, and they join the rounded corners with all derivatives matching. The convex and concave corners give opposite signs of $\kappa_g$, separated by intervals on which it vanishes. This is consistent with the inflection restrictions in \cite[Notes 5.1 and 5.2]{GR}. Those restrictions do not require the curvature to vanish on an entire interval. The straight subarcs are a feature of this construction, not an additional necessary condition on every example. Figure \ref{add:fig:core-normal} displays the spatial core and its normal curve for the choice \eqref{add:eq:drawing-bump}. Matching labels mark the midpoints of the same rounded corners. The retained straight pieces of $\gamma$ and the corresponding great-circle pieces of $n$ are computed from the formulas above. The small rounded portions are shown by numerical sampling. The choice of $\psi$ changes only the traversal of the straight pieces and does not affect the images shown. These plots illustrate the construction; embeddedness and exact closure follow from the preceding proofs.
\begin{figure}[H]
\centering
\begin{minipage}[c]{0.49\linewidth}
\centering
\begin{tikzpicture}[x={(0.123101cm,-0.008130cm)},y={(0.021706cm,0.046128cm)},z={(0cm,0.115898cm)},font=\scriptsize,>=Stealth]
\foreach \ACi/\ACpx/\ACpy/\ACqx/\ACqy/\ACax/\ACay/\ACex/\ACey/\AClen/\ACblen/\ACzz in \ACvertexdata {
\pgfmathsetmacro{\ACturn}{\ACax*\ACey-\ACay*\ACex}
\pgfmathsetmacro{\ACzend}{\ACzz+0.01*(\ACqx*(\ACay+\ACey)-\ACqy*(\ACax+\ACex))+0.0001*\ACturn*\ACarea}

\draw[line width=0.55pt] ({\ACpx-0.01*\ACax},{\ACpy-0.01*\ACay},\ACzz)
foreach \ACrx/\ACry/\ACaa in \ACcornerdata {
--({\ACpx+0.01*(\ACrx*\ACax+\ACry*\ACex)},
{\ACpy+0.01*(\ACrx*\ACay+\ACry*\ACey)},
{\ACzz+0.01*(\ACqx*((\ACrx+1)*\ACay+\ACry*\ACey)-\ACqy*((\ACrx+1)*\ACax+\ACry*\ACex))+0.0001*\ACturn*\ACaa})
};

\draw[line width=0.55pt] ({\ACpx+0.01*\ACex},{\ACpy+0.01*\ACey},\ACzend)
--({\ACpx+(\AClen-0.01)*\ACex},{\ACpy+(\AClen-0.01)*\ACey},{\ACzend+(\AClen-0.02)*(\ACqx*\ACey-\ACqy*\ACex)});

\pgfmathtruncatemacro{\ACmark}{(\ACi==0)||(\ACi==3)||(\ACi==9)}
\ifnum\ACmark=1
\coordinate (ACpoint) at ({\ACpx+0.01*(\ACmidX*\ACax+\ACmidY*\ACex)},
{\ACpy+0.01*(\ACmidX*\ACay+\ACmidY*\ACey)},
{\ACzz+0.01*(\ACqx*((\ACmidX+1)*\ACay+\ACmidY*\ACey)-\ACqy*((\ACmidX+1)*\ACax+\ACmidY*\ACex))+0.0001*\ACturn*\ACmidA});
\fill (ACpoint) circle[radius=1pt];
\ifnum\ACi=0 \node[left] at (ACpoint) {$0$};\fi
\ifnum\ACi=3 \node[right] at (ACpoint) {$3$};\fi
\ifnum\ACi=9 \node[left] at (ACpoint) {$9$};\fi
\fi
}
\coordinate (ACorigin) at (17,0,-18);
\draw[->] (ACorigin)--++(7,0,0) node[right] {$x$};
\draw[->] (ACorigin)--++(0,7,0) node[left] {$y$};
\draw[->] (ACorigin)--++(0,0,7) node[above] {$z$};
\end{tikzpicture}

\small (a) The spatial core $\gamma$
\end{minipage}\hfill
\begin{minipage}[c]{0.49\linewidth}
\centering
\begin{tikzpicture}[scale=2,font=\scriptsize]
\draw[black!35] (0,0) circle[radius=1];
\draw[black!15,densely dashed] (0,0) circle[radius=0.5];
\draw[black!15,densely dashed] (0,0) circle[radius=0.8660254];
\draw[black!15,densely dashed] (-1,0)--(1,0) (0,-1)--(0,1);

\foreach \ACi/\ACpx/\ACpy/\ACqx/\ACqy/\ACax/\ACay/\ACex/\ACey/\AClen/\ACblen/\ACzz in \ACvertexdata {
\pgfmathsetmacro{\ACstartnorm}{sqrt(1+(\ACqx-0.01*\ACax)^2+(\ACqy-0.01*\ACay)^2)}

\draw[line width=0.55pt] ({(\ACqy-0.01*\ACay)/\ACstartnorm},{-(\ACqx-0.01*\ACax)/\ACstartnorm})
foreach \ACrx/\ACry/\ACaa in \ACcornerdata {
--({(\ACqy+0.01*(\ACrx*\ACay+\ACry*\ACey))/sqrt(1+(\ACqx+0.01*(\ACrx*\ACax+\ACry*\ACex))^2+(\ACqy+0.01*(\ACrx*\ACay+\ACry*\ACey))^2)},
{-(\ACqx+0.01*(\ACrx*\ACax+\ACry*\ACex))/sqrt(1+(\ACqx+0.01*(\ACrx*\ACax+\ACry*\ACex))^2+(\ACqy+0.01*(\ACrx*\ACay+\ACry*\ACey))^2)})
};

\draw[line width=0.55pt,domain=0:1,samples=31,variable=\ACt]
plot ({(\ACqy+(0.01+(\ACblen-0.02)*\ACt)*\ACey)/sqrt(1+(\ACqx+(0.01+(\ACblen-0.02)*\ACt)*\ACex)^2+(\ACqy+(0.01+(\ACblen-0.02)*\ACt)*\ACey)^2)},
{-(\ACqx+(0.01+(\ACblen-0.02)*\ACt)*\ACex)/sqrt(1+(\ACqx+(0.01+(\ACblen-0.02)*\ACt)*\ACex)^2+(\ACqy+(0.01+(\ACblen-0.02)*\ACt)*\ACey)^2)});

\pgfmathtruncatemacro{\ACmark}{(\ACi==0)||(\ACi==3)||(\ACi==9)}
\ifnum\ACmark=1
\pgfmathsetmacro{\ACmx}{\ACqx+0.01*(\ACmidX*\ACax+\ACmidY*\ACex)}
\pgfmathsetmacro{\ACmy}{\ACqy+0.01*(\ACmidX*\ACay+\ACmidY*\ACey)}
\pgfmathsetmacro{\ACmn}{sqrt(1+(\ACmx)^2+(\ACmy)^2)}
\coordinate (ACnormalpoint) at ({\ACmy/\ACmn},{-\ACmx/\ACmn});
\fill (ACnormalpoint) circle[radius=1pt];
\ifnum\ACi=0 \node[above right] at (ACnormalpoint) {$0$};\fi
\ifnum\ACi=3 \node[below] at (ACnormalpoint) {$3$};\fi
\ifnum\ACi=9 \node[above right] at (ACnormalpoint) {$9$};\fi
\fi
}
\end{tikzpicture}

\small (b) The spherical normal curve $n$
\end{minipage}
\caption{The spatial core and its normal curve for~\eqref{add:eq:drawing-bump}. The first panel is an orthogonal projection of $\gamma$; its planar projection is the rounded version of $P$ in Figure~\ref{add:fig:polygons}. The second panel shows the upper hemisphere viewed from $e_3$, with the equator as its outer circle. Labels $0$, $3$, and $9$ mark corresponding corner midpoints. The curves are numerically rendered, with the exact closing parameter specified by~\eqref{eq:L-choice}.}
\label{add:fig:core-normal}
\end{figure}

\subsection{Gaussian curvature}

The ruled parametrization also permits an explicit curvature calculation away from $\gamma$. In the arc-length parameter $s$, write
\[
	F(s,u)=\gamma(s)+u\nu(s),\qquad A(s,u)=1-u\kappa_g(s),\qquad H(s,u)=A(s,u)^2+u^2\tau_g(s)^2.
\]
The Darboux equations give
\[
	F_s=A T+u\tau_g n,\qquad F_u=\nu,\qquad F_s\times F_u=A n-u\tau_g T.
\]
Since $\tau_g>0$, the function $H$ is positive for every real $u$. Thus the extended ruled map is an immersion, with first fundamental form and unit normal
\[
	g_F=H\,\dd s^2+\dd u^2,\qquad\mathcal N(s,u)=\frac{A n-u\tau_g T}{\sqrt H}.\eqnum\label{add:eq:strip-metric}
\]
Here $F_{uu}=0$ and $F_{su}=-\kappa_g T+\tau_g n$, so
\[
	\II_{uu}=0,\qquad\II_{su}=\langle F_{su},\mathcal N\rangle=\frac{u\kappa_g\tau_g+A\tau_g}{\sqrt H}=\frac{\tau_g}{\sqrt H}.
\]
Consequently,
\[
	K(s,u)=-\frac{\tau_g(s)^2}{\bigl((1-u\kappa_g(s))^2+u^2\tau_g(s)^2\bigr)^2}<0.\eqnum\label{add:eq:strip-curvature}
\]
At $u=0$ this recovers \eqref{eq:negative-curvature}. On a straight subarc of the core it reduces to
\[
	K(s,u)=-\frac{\tau_g(s)^2}{\bigl(1+u^2\tau_g(s)^2\bigr)^2}<0.
\]
Thus a straight core segment lies on a strictly negatively curved part of the surface. The small width in Proposition \ref{prop:annulus} is needed for global embeddedness and injectivity of the Gauss map, not for the sign of the curvature of the ruled immersion.

\subsection{Representation as a graph}

The annulus can in fact be chosen to be a graph over a planar annulus. This gives another interpretation of the two planar curves and of the mixed integral.

\begin{corollary}\label{add:cor:graph}
	Let $P,Q,f$ be the data in Proposition \ref{prop:planar}, and let $z$ be defined by \eqref{eq:lift}. After restricting the annulus in Proposition \ref{prop:annulus}, there are a bounded planar annulus $\Omega\subset\R^2$ containing $P(\Torus)$ and a function $h\in C^\infty(\Omega)$ such that
	\[
		M=\{(x,h(x)):x\in\Omega\},\qquad\det D^2h<0\quad\text{on }\Omega,
	\]
	and $\nabla h:\Omega\to\R^2$ is injective. Along the core,
	\[
		\begin{aligned}
			h(P(t))&=z(t),&\nabla h(P(t))&=JQ(t),\\
			D^2h(P(t))P'(t)&=f(t)JP'(t),&\det D^2h(P(t))&=-f(t)^2.
		\end{aligned}
		\eqnum\label{add:eq:graph-data}
	\]
\end{corollary}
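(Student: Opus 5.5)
The plan is to show that the horizontal projection $\pi(x,y,z)=(x,y)$ restricts to a diffeomorphism from a thin strip of $M$ onto a planar annulus. Once that holds, $M$ is a graph of some $h$, and the remaining identities are read off from the normal.

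The first step checks the projection at the core. Since $n_3>0$ along $\gamma$ and the Gauss map $\mathcal N$ is continuous, we have $\mathcal N_3>0$ on a uniform strip $|u|\le a'$. There the composite $\pi\circ F$ is an immersion into $\R^2$, because the tangent plane of $M$ is never vertical. Along $u=0$ we have $\pi\circ F(t,0)=P(t)$, which is injective because $P$ is an embedding.

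The second step gets injectivity on a strip. Lemma \ref{lem:normal-neighborhood}, applied with $Y=\R^2$ and $H=\pi\circ F$, makes $\pi\circ F$ an injective immersion on $\Torus\times[-a'',a'']$. Its image $\Omega$ of the open strip is then an open planar annulus containing $P(\Torus)$, and $\pi|_M$ is a diffeomorphism onto $\Omega$. Restricting $M$ to this strip, we define $h=z\circ(\pi|_M)^{-1}$, which is smooth.

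The third step reads off the identities. For a graph, the upward unit normal is $(-\nabla h,1)/\sqrt{1+|\nabla h|^2}$. Comparing this with $\mathcal N$, which has $\mathcal N_3>0$, gives $\nabla h=-(\mathcal N_1,\mathcal N_2)/\mathcal N_3$. At the core, $n=(-JQ,1)/\sqrt{1+|Q|^2}$, so $\nabla h(P)=JQ$; also $h(P)=z$ by construction. Differentiating $\nabla h(P(t))=JQ(t)$ gives $D^2h(P)P'=JQ'=fJP'$.

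Next we need $\det D^2h$ at the core. The Hessian is symmetric, and $D^2h(P)P'=fJP'$ gives $\langle D^2hP',P'\rangle=0$. In the orthogonal basis $(P',JP')$ the entries are therefore $0$ (diagonal, $P'P'$) and $f|P'|^2$ (off-diagonal). Hence the determinant equals $-f^2|P'|^4/|P'|^4=-f^2$.

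Off the core, the sign of $\det D^2h$ follows from $K=\det D^2h/(1+|\nabla h|^2)^2$ and $K<0$ on $M$. Injectivity of $\nabla h$ holds because $\nabla h=\Psi\circ N\circ(\pi|_M)^{-1}$, where $\Psi(\xi)=-(\xi_1,\xi_2)/\xi_3$ is a diffeomorphism of the upper hemisphere onto $\R^2$ and $N$ is injective.

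The main obstacle is the second step: arranging that one restriction of the strip makes the projection injective while keeping every earlier property. This works because all the properties are open and are inherited by smaller strips. The only point needing care is the compactness argument of Lemma \ref{lem:normal-neighborhood} with target $\R^2$. That lemma applies verbatim because the core map $P$ is injective and the differential has rank two there. Boundedness of $\Omega$ is immediate from compactness of the closed strip.
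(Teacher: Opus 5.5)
Your proposal is correct and follows essentially the same route as the paper. In both, $\pi\circ F$ is made an injective immersion on a thin strip via Lemma \ref{lem:normal-neighborhood} with target $\R^2$; $\nabla h(P)=JQ$ is read off from the upward graph normal and differentiated along the core; $\det D^2h(P)=-f^2$ follows from symmetry of the Hessian in a basis adapted to $P'$; and the sign of $\det D^2h$ and injectivity of $\nabla h$ come from $K<0$ and the hemisphere diffeomorphism. The differences are cosmetic: you get the rank condition from $\mathcal N_3>0$ rather than from the paper's explicit Jacobian $(F_t\times F_u)_3=vn_3$ at the core, and you work in the orthogonal basis $(P',JP')$ instead of the orthonormal one.
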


\begin{proof}
	Let $\pi:\R^3\to\R^2$ be the horizontal projection. On the core, $\pi F(t,0)=P(t)$, and
	\[
		\det\left((\pi F)_t,(\pi F)_u\right)(t,0)=(F_t\times F_u)_3(t,0)=v(t)n_3(t)>0.
	\]
	Since $P$ is an embedding, Lemma \ref{lem:normal-neighborhood}, applied with target $\R^2$, shows that $\pi\circ F$ is an injective immersion on a sufficiently small closed strip. Its restriction to the open strip is a diffeomorphism onto a bounded planar annulus $\Omega$. The third coordinate of $F\circ(\pi\circ F)^{-1}$ defines $h$. We may also shrink the strip so that its chosen normal has positive third component. The upward graph normal is then
	\[
		N(x,h(x))=\frac{(-\nabla h(x),1)}{\sqrt{1+|\nabla h(x)|^2}}.
	\]
	The map $p\mapsto(-p,1)/\sqrt{1+|p|^2}$ is a diffeomorphism from $\R^2$ onto the upper hemisphere. Hence injectivity of the Gauss map implies injectivity of $\nabla h$. The graph curvature formula
	\[
		K(x,h(x))=\frac{\det D^2h(x)}{(1+|\nabla h(x)|^2)^2}
	\]
	gives $\det D^2h<0$ on $\Omega$. Comparing the graph normal with \eqref{eq:lift} gives $\nabla h(P)=JQ$, and differentiating yields
	\[
		D^2h(P)P'=JQ'=fJP'.
	\]
	At each point of $P$, set $E=P'/|P'|$. In the orthonormal basis $(E,JE)$, symmetry of the Hessian therefore gives
	\[
		D^2h(P)=
		\begin{pmatrix}
			0&f\\
			f&\mu
		\end{pmatrix}
	\]
	for some real-valued function $\mu$. Its determinant is $-f^2$, proving \eqref{add:eq:graph-data}.
\end{proof}

Thus $P$ prescribes the position of the core in the base plane, and $JQ$ prescribes the gradient of the graph along it. The asymptotic condition is precisely $\langle D^2h(P)P',P'\rangle=0$. The period condition has the corresponding interpretation
\[
	\int_0^{24}\langle\nabla h(P(t)),P'(t)\rangle dt=\int_0^{24}\det(Q(t),P'(t))dt=0:
\]
the prescribed first derivatives must return the height to its initial value. The graph is defined on an annulus; no extension across the bounded complementary component is used.

\section{Proof of Theorem \ref{thm:main} and Corollary \ref{cor:binormal}}

We use the following elementary observation. Only the simplicity of the planar projection and the hemisphere condition on the normal are needed.

\begin{lemma}\label{lem:zero-linking}
	Suppose that $\gamma(t)=(P(t),z(t))$, where $P:\Torus\to\R^2$ is a smooth regular embedding and $z$ is smooth. Let $n:\Torus\to\Sph^2$ be a smooth field with $\langle n,\gamma'\rangle=0$ and $n_3>0$. Then $\gamma+\varepsilon n$ is a regular embedding disjoint from $\gamma$ for every sufficiently small $\varepsilon>0$, and $\lk(\gamma,n)=0$.
\end{lemma}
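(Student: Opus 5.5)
The plan is to deform the normal field $n$ to the vertical field $e_3=(0,0,1)$ through fields that stay transverse to $\gamma'$. Along the way, the push-off stays disjoint from $\gamma$ at a uniform scale, so the linking number stays constant. At the end we compute it for the vertical push-off.

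Set $V(t,r)=(1-r)n(t)+re_3$ for $0\le r\le1$. Its third component is $(1-r)n_3+r>0$, so $V$ never vanishes. Moreover, $V(t,r)$ is never parallel to $\gamma'(t)$. For $r=0$ this holds because $n\perp\gamma'$ and both are nonzero. For general $r$, the condition $V=c\gamma'$ would force $c\ne0$, since $V\neq0$, and then $\langle V,n\rangle=c\langle\gamma',n\rangle=0$. But $\langle V,n\rangle=(1-r)+rn_3>0$, a contradiction. Hence $\gamma'$ and $V(t,r)$ are linearly independent for every $(t,r)$. By Lemma~\ref{lem:ribbon} there is a uniform $a>0$ such that $F_r(t,u)=\gamma(t)+uV(t,r)$ embeds $\Torus\times[-a,a]$ for every $r$.

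Fix $\varepsilon\in(0,a]$. The curve $\gamma+\varepsilon n=F_0(\cdot,\varepsilon)$ is then an embedded curve, and it is disjoint from $\gamma=F_0(\cdot,0)$ because $F_0$ is injective. It is regular because $F_0$ is an immersion and $\partial_t$ is tangent to the slice $u=\varepsilon$. Now vary $r$. The curves $\gamma$ and $F_r(\cdot,\varepsilon)$ remain disjoint for every $r$, so homotopy invariance of the linking number gives
\[
\lk(\gamma,\gamma+\varepsilon n)=\lk(\gamma,\gamma+\varepsilon e_3).
\]
The normalization to unit length in the definition is irrelevant here, since only the $r=0$ field enters $\lk(\gamma,n)$.

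It remains to show that $\lk(\gamma,\gamma+\varepsilon e_3)=0$. This is the step that genuinely uses the simplicity of $P$. Consider the vertical translate $\gamma+se_3$ for $s\ge\varepsilon$. Because $P$ is injective, $\gamma(t)=\gamma(t')+se_3$ with $s\neq0$ would force $P(t)=P(t')$, hence $t=t'$, and then $s=0$. So these translates never meet $\gamma$. Take $s$ large, larger than $\max z-\min z$. Then the translate lies in a half-space $\{x_3>c\}$ that is disjoint from $\gamma$. Such a translate bounds a disk in that half-space, for instance the cone from a point high above it, and this disk misses $\gamma$. Therefore the linking number is $0$. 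By homotopy invariance along $s\in[\varepsilon,S]$, we conclude $\lk(\gamma,\gamma+\varepsilon e_3)=0$, and hence $\lk(\gamma,n)=0$.

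The only subtle point is the uniformity in $r$ of the embedded strip. Lemma~\ref{lem:ribbon} was designed exactly to supply it, so I expect no real obstacle beyond checking the independence condition above.
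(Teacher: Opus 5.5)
Your proof is correct and follows the paper's argument. You use the same homotopy $V(t,r)=(1-r)n+re_3$, kept transverse to $\gamma'$ by $\langle n,V\rangle>0$, and the uniform strip from Lemma~\ref{lem:ribbon} to reduce $\lk(\gamma,n)$ to the vertical push-off, and injectivity of $P$ then keeps vertically offset copies of $\gamma$ disjoint. The only difference is cosmetic: you translate $\gamma+\varepsilon e_3$ upward until a horizontal plane separates it from $\gamma$, whereas the paper flattens both curves to $(P,(1-r)z)$ and $(P,(1-r)z)+\varepsilon e_3$ until they lie in parallel planes.
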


\begin{proof}
	Let $e_3=(0,0,1)$ and set
	\[
		V(t,r)=(1-r)n(t)+re_3,\qquad 0\leq r\leq1.
	\]
	Since
	\[
		\langle n,V(t,r)\rangle=(1-r)+rn_3(t)>0,
	\]
	each field $V(\cdot,r)$ is transverse to $\gamma'$. Lemma \ref{lem:ribbon} supplies $\varepsilon_0>0$ such that $\gamma$ and $\gamma+\varepsilon V(\cdot,r)$ are disjoint regular embeddings for all $r\in[0,1]$ and $0<\varepsilon<\varepsilon_0$. Homotopy invariance of linking therefore gives
	\[
		\lk(\gamma,\gamma+\varepsilon n)=\lk(\gamma,\gamma+\varepsilon e_3).
	\]
	For fixed $\varepsilon$ in this interval, flatten the latter pair by
	\[
		\alpha_r(t)=(P(t),(1-r)z(t)),\qquad\beta_r(t)=\alpha_r(t)+\varepsilon e_3,\qquad 0\leq r\leq1.
	\]
	Each component is a regular embedding, since its planar projection is $P$. The two components remain disjoint: equality of their planar coordinates forces equality of the parameters, but their heights then differ by $\varepsilon$. At $r=1$ they lie in two parallel planes and have linking number zero. Another application of homotopy invariance proves the lemma.
\end{proof}

\begin{proof}[Proof of Theorem \ref{thm:main} and Corollary \ref{cor:binormal}]
	Choose $P,Q,f$ from Proposition \ref{prop:planar}, and form the lift \eqref{eq:lift}. Proposition \ref{prop:annulus} gives the embedded negatively curved annulus and its injective Gauss map. Since $P$ is embedded and $n_3>0$, Lemma \ref{lem:zero-linking} gives zero normal self-linking. This proves the theorem. The same $\gamma$ and $n$ satisfy the corollary by \eqref{eq:orthogonality} and the regularity and injectivity of $n$.
\end{proof}

\begin{remark}\label{rem:continuous-binormal}
	To see why regularity of the binormal matters in Corollary \ref{cor:binormal}, consider
	\[
		\gamma_0(t)=(\cos t,\sin t,\cos2t),\qquad b_0(t)=\frac{(-4\cos^3t,4\sin^3t,1)}{\sqrt{1+16(\cos^6t+\sin^6t)}}.
	\]
	The numerator of $b_0$ is $\gamma_0'\times\gamma_0''$, so $b_0$ is a smooth unit binormal. It is injective: dividing its first two coordinates by its third recovers $-4\cos^3t$ and $4\sin^3t$, which determine $t$ modulo $2\pi$. Lemma \ref{lem:zero-linking} gives $\lk(\gamma_0,b_0)=0$. However, $b_0'(t)=0$ at the four points where $\sin2t=0$, so $b_0$ is not a regular embedding. This is the case $a=1$ of the closed asymptotic curves on the graph $z=x^4-y^4$ in \cite[Section 2]{NZZ}. It answers the continuous-binormal question as stated in \cite[Problem 1.4]{Ghomi}, but not its regular-binormal version.
\end{remark}

\raggedbottom
\par\bigskip
\noindent\textbf{Acknowledgements.}

This paper arose in close collaboration with ChatGPT 6-Astra. The collaboration involved exploring proof strategies, refining arguments, and revising the manuscript. The author takes full responsibility for the mathematical content and the final text.

\end{document}